\newtheorem{theorem}{Theorem}
\newtheorem{proposition}[theorem]{Proposition}
\theoremstyle{definition}
\newtheorem{defn}{Definition}
\theoremstyle{definition}
\theoremstyle{definition}
\author{Indira Chatterji, Ariadna Fossas Tenas and Elise Raphael}
\date{\today}
\title{Under the hood of a carbon footprint calculator}
\begin{document}
\maketitle

\begin{abstract}
We explain the basic linear algebra formulas used in carbon footprints computations.
\end{abstract}
\bigskip

Have you computed your carbon footprint lately? Wondered where all those numbers came from? Two main methods for calculating carbon footprints are the {\it Input-Output} method and the {\it Lifecycle Analysis} method, also known as {\it Craddle-to-Grave}. Both aim to quantify the emissions of a product or service, but the methods are fundamentally different.

Input-Output is an economic approach developed in \cite{L}, widely accepted since the 60ies, but only recently adapted to environmental reporting. It uses national or regional data on industries and their inter-dependencies. It relies on aggregated data about economic sectors (e.g., agriculture, manufacturing) and their emissions, and we shall see in this note (Definition \ref{totalintensity}) the formula estimating emissions associated with production, supply chains, and services. This is the commonly accepted method for large-scale, economy-wide assessments, is not data-intensive, relies on publicly available economic data. It will lack detail on specific products and miss the emissions from smaller, niche activities, but offers the warranty that all emissions are taken into account (Proposition~\ref{consistent}).

Lifecycle Analysis estimates a specific product’s environmental impact over its entire lifecycle, from raw material extraction to disposal. However, the under or over counting of secondary emissions are only estimated and the total complexity makes a consistency check impossible to actually carry out.
\subsection*{A concrete question}
To estimate the carbon footprint of a T-shirt for instance, one needs the footprint of the cotton all the way from production of the raw material up to the stitching of the T-shirt, the washing during its lifespan and the disposal of the unusable T-shirt. But to estimate the cotton footprint only, one needs the transport of the water, the production of the pesticides to protect the crop, the fertilizers, all those requiring tractors and fuel, the fuel itself requiring transport and fuel, thus creating loops that are difficult to extricate. And once estimated all the emissions, how do we know that all the emissions of all the objects that we so compute, actually sum up to all the emissions that we actually produce? The Input-Output method takes the problem from the other end: it takes {\it all} the measured emissions, and then attributes it to different sectors, and then computes the carbon footprint of a goods from the textile industry according to its direct emissions, combined with all the indirect emissions as we now explain.
\subsection*{Basic Input-Output}
This method models a closed economy with $n$ {\it economical sectors}, $S=(S_1,\dots,S_n)$, and comes in the following form:
$$\begin{array}{c|cccc|cc}
\ &S_1 & S_2&\dots&S_n &D&T\\
\hline
S_1&c_{11}&c_{12}&\dots&c_{1n}&d_1&t_1\\
S_2&c_{21}&c_{22}&\dots&c_{2n}&d_2&t_2\\
\dots&\dots&\dots&\dots&\dots&\dots&\dots\\
S_n&c_{n1}&c_{n2}&\dots&c_{nn}&d_n&t_n\\
\hline
V&v_1&v_2&\dots&v_n\ &\ \\
T&t_1&t_2&\dots&t_n\ &\ &  
\end{array}$$
The $n\times n$ {\it transaction matrix} is $C=(c_{ij})\in M_n({\bf R}_+)$ (positive coefficients only) has entry $c_{ij}$ the number representing the economical transactions from sector $S_i$ to $S_j$. The diagonal entries $c_{ii}$ then represent the internal gross revenue of the sector. The {\it demand vector} $D=(d_1,\dots,d_n)$ expresses the money spent by the population on each sector: $d_i$ is the amount bought from sector $S_i$, for $i=1,\dots,n$. The {\it added value} vector is given by $V=(v_1,\dots,v_n)$ and the {\it total} vector $T=(t_1,\dots,t_n)$ satisfy the following relations:
$$t_i=\sum_{j=1}^nc_{ij}+d_i=v_i+\sum_{j=1}^nc_{ji}$$
for $i=1,\dots,n$. The value $t_i$ represents the {\it total output} of the sector $S_i$, which is also the {\it total input} because the system is a closed circuit.
\begin{defn}\label{tech}
The {\it technical coefficient matrix} $A=(a_{ij})$ is given by
$$a_{ij}=\frac{c_{ij}}{t_j},$$
so that $A=CD_T^{-1}$ where $D_T$ denotes the diagonal matrix with entries $t_1,\dots,t_n$ in the diagonal.
\end{defn}
Recall that the scalar product of two vectors $X=(xi,\dots,x_n)$ and $Y^t=(y_1,\dots,y_n)$ is given by 
$$\left<X,Y\right>=\sum_{i=1}^nx_iy_i=XY$$
where $X$ is a {\it line vectors}, namely $1\times n$ matrix, and $Y$ is a {\it column vector}, namely an $n\times 1$ matrix and $XY$ is the usual matrix multiplication so a real number.
\subsection*{Footprints}
Economical activities have all sorts of footprints that are correlated to the income and the cost of things. To compute the footprint we need to have a {\it emission} vector $E=(e_1,\dots,e_n)$ where $e_i$ is the direct emission of sector $S_i$. It could be millions of tons of $CO_2$, measured or estimated directly above the industries of the sector, or square feet measured by the occupancy of the industries, or tons of plastic trash estimated by the collectivities, etc. Then $|E|=e_1+\cdots+e_n$ is the total footprint of the economical system. The {\it direct intensity} is given by $F=(f_1,\dots,f_n)$ with
$$f_i=\frac{e_i}{t_i}$$
each entry is the print per unit of money. Now, if we compute the inner product 
$$\left<D, F\right>=\sum_{i=1}^nd_if_i$$ 
it is the footprint directly attributed to the consumers, but it doesn't take into account all the intermediate exchanges that the sectors had to create the product. If you buy goods for $k$ dollars to industry $S_i$, then $kf_i$ indicates the direct footprint of the purchase, but not the total footprint of the goods. That one has to take into account the exchanges the sector $S_i$ had from the other ones. 
\begin{defn}\label{totalintensity}
The {\it total intensity vector} $X=(x_1,\cdots,x_n)$ giving the total footprint of the sector $S_i$ by money amount of final consumer demand, is given by
$$X=F+FA+FA^2+\dots=F(I-A)^{-1}$$
where $I$ denotes the $n\times n$ identity matrix and $A$ the technical coefficients matrix from Definition \ref{tech}.
\end{defn}
Notice that since most matrices are invertible we'll assume that this one is. %
\begin{proposition}\label{consistent}
The total footprint is completely attributed to the consumers. Namely, with the above notations, $\left<X,D\right>=|E|$.
\end{proposition}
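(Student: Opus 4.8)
The plan is to reduce the whole statement to a single matrix identity that comes directly from the first balance relation, and then to observe that the factor $(I-A)^{-1}$ appearing in $X$ is exactly undone by it. The central point is that the relation $t_i=\sum_{j=1}^n c_{ij}+d_i$, once we substitute $c_{ij}=a_{ij}t_j$ from Definition~\ref{tech}, is nothing but the coordinate form of a linear system relating the total vector $T$ to the demand vector $D$.

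First I would rewrite the balance relation. Since $c_{ij}=a_{ij}t_j$, we get
$$t_i=\sum_{j=1}^n a_{ij}t_j+d_i\qquad\text{for } i=1,\dots,n.$$
Reading $\sum_j a_{ij}t_j$ as the $i$-th coordinate of $A$ applied to the column vector $T=(t_1,\dots,t_n)^t$, this is precisely $T=AT+D$, hence $(I-A)T=D$ and, under the standing invertibility assumption, $T=(I-A)^{-1}D$.

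Next I would feed this into the inner product. Using the closed form $X=F(I-A)^{-1}$ from Definition~\ref{totalintensity} and pairing with $D$ viewed as a column vector,
$$\langle X,D\rangle=F(I-A)^{-1}D=F\big((I-A)^{-1}D\big)=FT=\sum_{i=1}^n f_i t_i.$$
Finally, because $f_i=e_i/t_i$, each summand collapses to $f_i t_i=e_i$, so $\langle X,D\rangle=\sum_{i=1}^n e_i=|E|$, as claimed.

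The computation is short, so I do not expect a genuine obstacle; the one point deserving care is the bookkeeping of row versus column conventions. One must apply $(I-A)^{-1}$ to $D$ on the right, so that it combines with $D$ rather than with the row vector $F$, and it is exactly this grouping, justified by associativity of matrix multiplication, that lets the Leontief inverse cancel against the solution $T=(I-A)^{-1}D$ of the balance system. A secondary remark is that the series expression $F+FA+FA^2+\cdots$ for $X$ is used only through its closed form $F(I-A)^{-1}$, so convergence questions are already absorbed into the invertibility hypothesis noted before the proposition.
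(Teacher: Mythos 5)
Your proof is correct and follows essentially the same route as the paper: both arguments hinge on recasting the balance relation $t_i=\sum_j c_{ij}+d_i$ as the matrix identity $(I-A)T=D$, and then cancelling this against the Leontief inverse in $X=F(I-A)^{-1}$ to get $\langle X,D\rangle=FT=|E|$. The only difference is cosmetic: the paper derives $(I-A)T=D$ by writing $C\mathbf{1}=AD_T\mathbf{1}=AT$, whereas you substitute $c_{ij}=a_{ij}t_j$ coordinatewise, which is the same computation.
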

\begin{proof}
First notice that the equations $t_i=\sum_{j=1}^nc_{ij}+d_i$ for $i=1,\dots,n$ rephrase as 
$$D=T-C{\bf 1}=T-AD_T{\bf 1}=T-AT=(I-A)T,$$
where ${\bf 1}=(1,\dots,1)$ and $T$ is the total vector and $D$ the demand vector, both viewed as column vectors. We compute:
$$\left<X,D\right>=\left<F(I-A)^{-1},(I-A)T\right>=FT=\sum_{i=1}^n\frac{e_i}{t_i}t_i=|E|$$
\end{proof}
%
\subsection*{Waste} In a closed economy, in order to take into account waste disposal, one needs a sector $S_i$ that runs the waste management. That sector has a large footprint since incinerators produce large quantities of $CO_2$ (of the order of 1kg $CO_2$ per kg of waste incinerated \cite{Waste}), and is eventually attributed to the consumers through waste management taxes (in France the weight of waste is estimated at half a ton per person and per year). The model also only doesn't discuss demand and waste, assuming that everything is bought and nothing gets wasted.
\subsection*{Corporations} The carbon footprint inventory of a specific firm or corporation is separated in its direct emissions or \emph{scope one}, and its indirect emissions. The latter are separated as \emph{scope two} consisting of the emissions from acquired electricity, heat, and cooling, and the rest is \emph{scope three}, computed using databases of lifecycle analysis \cite{GHGpro}.
\subsection*{Inverses} If $B$ is a matrix $\epsilon$-close to $A$ (say in terms of the coefficients), then the distance between $(I-A)^{-1}$ and $(I-B)^{-1}$ could a priori be as big as we want, for instance if the matrices both have an eigenvalue close enough to 1. Since all the values are estimated, the model is probably not very stable in an optimization exercise.
\subsection*{Added value} The consumer footprint doesn't take into account the added value $V$, which is what the companies make, and the model attributes the footprint solely to the consumers, through the vector $D$. 
We could define {\it the total systemic vector}
$$Y=F(I-A^t)^{-1}$$
and check that $\left<Y,V\right>=|E|$. That vector distributes the total footprint to the added value of the different sectors. That attribution seems more natural than the consumer one, as it links the footprint to the benefits of the companies, see also the 2021 opinion piece \cite{NYT}.
\subsection*{Concretely} The tables are quite available, see for Switzerland \cite{IOCH}, but a measure of environmental impact for each sector appearing in the table is harder to obtain, especially as it needs to be computed/collected in an homogeneous manner over the diﬀerent sectors in order to make sense.
\subsection*{Acknowledgements} This note is based on the article by J. Kitzes \cite{JK}, the talk by J. Steinberger at the Climathics 2022 conference \cite{Clim}, and the third author's talk for the {\it Comission Romande de mathématiques} in 2024.

\end{document}